\documentclass[letterpaper, 10 pt, conference]{ieeeconf}
\IEEEoverridecommandlockouts                              
\overrideIEEEmargins

\usepackage{amsthm}
\usepackage{enumerate}
\usepackage{amsfonts} 
\usepackage{amsmath} 
\usepackage{algorithm}
\newcommand{\N}{\mathcal{N}}
\usepackage{algorithmic}
\usepackage{amssymb}
\usepackage{xcolor}
\usepackage{url}
\usepackage{lipsum,adjustbox}
\usepackage{tikz}
\usepackage{multirow}

\usepackage[shortlabels]{enumitem}

\usepackage{cite}
\newtheorem{theorem}{Theorem}[section]
\newtheorem{proposition}[theorem]{Proposition}

\theoremstyle{remark}
\newtheorem{remark}{Remark}
\theoremstyle{definition}

\DeclareMathOperator{\Lc}{\mathcal{L}}

\DeclareMathOperator{\real}{\mathbb{R}}

\newcommand{\longthmtitle}[1]{\mbox{} \textit{(#1):}}


\title{Accelerated Algorithms for a Class of Optimization Problems
with Equality and Box Constraints
\thanks{The first author was supported by a Siemens Fellowship. This work was supported in part by Boeing Strategic University Initiative, and in part by the NSF CPS-DFG Joint award 1932406.}}

\author{Anjali Parashar \quad  Priyank Srivastava \quad Anuradha M. Annaswamy 
\thanks{A. Parashar, P. Srivastava and A. M. Annaswamy are with the Department of Mechanical Engineering, Massachusetts Institue of Technology.
Email:\texttt{\{anjalip,psrivast,aanna\}@mit.edu}.
}
}

\pagestyle{empty}
\begin{document}
\maketitle

\begin{abstract}
Convex optimization with equality and inequality constraints is a ubiquitous problem in several optimization and control problems in
large-scale systems. Recently there has been a lot of interest in
establishing accelerated convergence of the loss function. A class of
high-order tuners was recently proposed in an effort to lead to accelerated convergence for the case when no constraints
are present. In this paper, we propose a new high-order tuner that can
accommodate the presence of equality constraints. In order to
accommodate the underlying box constraints, time-varying gains are
introduced in the high-order tuner which leverage convexity and ensure
anytime feasibility of the constraints. Numerical examples are provided to
support the theoretical derivations.
\end{abstract}
\thispagestyle{empty}
\section{Introduction}
A class of algorithms referred to as High Order Tuners (HT) was proposed for parameter estimation involved in dynamic systems~\cite{Gaudio2020ASystems}. These iterative algorithms utilize second-order information of the system, enabling faster convergence in discrete time parameter estimation. This property of HT is motivated by Nesterov's algorithm~\cite{Nesterov2018LecturesOptimization},~\cite{JMLR:v17:15-084}. 
Additionally, HT has been proven to show stable performance in the presence of time-varying regressors~\cite{Gaudio2020ASystems} and for adaptive control in the presence of delays and high relative degrees~\cite{evesque2003adaptive},\cite{morse1992high}. To further leverage the accelerated convergence of HT in machine learning as well as adaptive control problems, discrete-time version of these algorithms for unconstrained convex optimization was proposed in~\cite{Jose2021-LCSS}, which demonstrated theoretical guarantees pertaining to convergence and stability.

Accelerated algorithms have been extended to equality-constrained convex optimization problems in~\cite{PS-JC:21-csl}, \cite{7963773}, \cite{xu2017accelerated}.  Accelerated  HT was extended to a class of constrained convex optimization problems, with a dual objective of deconstructing the loss landscapes of constrained optimization problems and applying HT with theoretical guarantees of stable performance in~\cite{9993120}. Several works in the past~\cite{PLD-DR-JZK:21} have utilized the variable reduction approach to transform a constrained convex optimization problem to an unconstrained optimization problem in a reduced dimension feasible solution space. However, this transformation does not guarantee convexity of the unconstrained optimization problem. In this paper, we present conditions on equality constraints under which the transformed problem is convex.

We first focus on optimization problems with equality constraints and elucidate the conditions under which HT can be deployed with theoretical guarantees pertaining to convergence and stability, using a variable reduction based technique. We then extend these results to optimization with equality constraints and box inequality constraints. Since the guarantees of convexity are only valid within the feasible region, it makes the task of solving hard inequality constraints challenging. To preserve theoretical guarantees pertaining to convergence, it is imperative to constrain the decision variable within a compact set within which the resulting loss function is convex. We show that the same HT proposed in~\cite{Gaudio2020ASystems},~\cite{Jose2021-LCSS} and ~\cite{9993120} can be used to guarantee feasibility and convergence to the optimal solution even in the presence of equality and box constraints. We present two numerical example problems in the paper to validate the approach.

The organization of the paper is as follows. Section II describes the broader category of constrained optimization problems and presents theorems pertaining to the performance of High Order Tuner (Algorithm 1) in solving them. In Section III, we present a specific category of constrained optimization problems where the loss function is not convex everywhere, but over known compact sets. We outline a novel formulation based on High Order Tuner with time-varying gains (Algorithm 2) to solve such problems. In Section IV, we look at a scalar example that validates the implementation of Algorithm 2 to a constrained convex optimization problem and demonstrates the accelerated convergence of High Order Tuner. The paper concludes with a summary of main contributions and future directions of research in Section V.  
\subsection*{Notation} We employ the following notations throughout the paper. 
For a vector $x \in \real^n$, with $1\le i<j \le n$, $x_{i:j}$ denotes the subvector with elements from the $i$-th entry of $x$ to the $j$-th entry.
For a vector-valued function $p:\real^m \to \real^n$, $p$ is convex (respectively, concave) on $\real^m$ implies that scalar function $p_i:\real^m \to \real$ is convex (respectively, concave) on $\real^m$ for all $i \in \{1,\ldots, n\}$.
We use the shorthand notation PSD to denote a symmetric positive semidefinite matrix.
For two sets $A$ and $B$, $A \times B$ denotes their cartesian product.

\section{Convex Optimization for a Class of Nonconvex Problems}\label{sec:nonlinear}
Consider the optimization problem
\begin{equation}\label{eq:problem-equality-nl}
    \begin{aligned}
        \min \ &f(x) \\
        \text{s.t.} \enspace & h(x) = 0,
    \end{aligned}
\end{equation}
here $x \in \real^n$ is the decision variable,
$f:\real^n \to \real$, $h : \real^n \to \real^{n-m}$ are continuously differentiable convex functions.
Without loss of generality, we assume that problem~\eqref{eq:problem-equality-nl} is not overdetermined, i.e.,  $m \le n$.
There are two key aspects to solving a constrained optimization problem given by~\eqref{eq:problem-equality-nl}; namely, feasibility and optimality. To ensure that the solution is feasible, we first address how to solve equality constraints using variable reduction technique employed in~\cite{9993120}.

We define $\Lc(\cdot)$ as the loss function consisting of the objective function and an optional term penalizing the violation of equality constraints in case it is not feasible to solve the equality constraints:
\begin{align}\label{eq:loss-equality-nl}
    \Lc(x)=f(x)+\lambda_h \|h(x)\|^2, \lambda_h \geq 0
\end{align}
In this paper, we restrict our attention to problems where $\Lc$ is convex, which holds for many cases where $f$ and $h$ are convex. 
To ensure feasibility, we utilize the linear dependence in the feasible solution space introduced by the equality constraints, as illustrated in~\cite{PLD-DR-JZK:21}. $x$ is partitioned into independent and dependent variables; $\theta \in \real^m$ and $z \in \real^{n-m}$ respectively
\begin{equation*}
    x = [\theta^T \quad z^T]^T, \quad z = p(\theta)
\end{equation*}  
Here $p:\real^m \to \real^{n-m}$ is a function that maps the dependence of $z$ on $\theta$, such that $h(x) = 0$. We assume that $h$ is such that given $m$ entries of $x$, its remaining $(n-m)$ entries can be computed either in closed form or recursively. If $p(\theta)$ can be computed explicitly, we set $\lambda_h =0$ in~\eqref{eq:loss-equality-nl}. Otherwise, $\lambda_h$ is chosen as a positive real-valued scalar. In other words, we assume that we have knowledge of the function \(p:\mathbb{R}^m \rightarrow \mathbb{R}^{n-m}\)  such that 
\begin{equation*}
    h([\theta^T \quad p(\theta)^T]^T) = 0 , \quad \forall \theta \in \mathbb{R}^m
\end{equation*} 
Using the function $p(\cdot)$ defined as above, we now define a modified loss function $l:\mathbb{R}^m \rightarrow \mathbb{R}$ as
\begin{equation}\label{eq:l_theta_expansion}
    l(\theta) = \mathcal{L}([\theta^T \quad p(\theta)^T]^T).
\end{equation}

The optimization problem  in~\eqref{eq:problem-equality-nl} is now reformulated as
an unconstrained minimization problem given by
\begin{equation}\label{eq:unconstrained-theta}
    \min \: l(\theta),
\end{equation}
with $\theta \in \mathbb{R}^m$ as the decision variable.
We now proceed to delineate conditions under which $l$ is convex in Proposition~\ref{prop:cvx-nl}.
\begin{proposition}\label{prop:cvx-nl}
\longthmtitle{Convexity of the modified loss function for equality-constrained nonconvex programs}
Assume that there exists a convex set $\Omega_n \in \real^n$ such that
 the functions $f$ and $h$ are convex on $\Omega_n$.
Let 
\begin{align}\label{eq:omega-m}
    \Omega_m=\{\theta \: | \: \theta=x_{1:m}, x \in \Omega_n \}.
\end{align}
If $\Lc$ is convex and any of the following conditions is satisfied:
\begin{enumerate}
    \item  $h$ is linear,
    \item $\nabla \Lc(x) \ge 0$ for all $x \in \Omega_n$ and $p$ is convex on $\Omega_m$,
    \item $\nabla \Lc(x) \le 0$ for all $x \in \Omega_n$ and $p$ is concave on $\Omega_m$,
    \end{enumerate}
    then $l$ is convex on $\Omega_m$.
\end{proposition}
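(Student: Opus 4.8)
The plan is to view the modified loss as a composition and apply the vector composition rule for convex functions. Writing $\Phi:\real^m\to\real^n$, $\Phi(\theta)=[\theta^T \quad p(\theta)^T]^T$, equation~\eqref{eq:l_theta_expansion} reads $l=\mathcal{L}\circ\Phi$. The first $m$ coordinates of $\Phi$ are the affine maps $\theta\mapsto\theta_i$, and the last $n-m$ coordinates are the components $p_1,\dots,p_{n-m}$ of $p$. Since $\mathcal{L}$ is convex by hypothesis, convexity of $l$ will follow once, for each \emph{nonlinear} coordinate of $\Phi$, a monotonicity property of $\mathcal{L}$ in the matching argument is matched to a curvature property of that coordinate; the affine coordinates impose no extra requirement. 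Throughout I would use that $\Omega_m$ is convex (being a linear image of the convex set $\Omega_n$) and that the graph point $\Phi(\theta)$ remains in $\Omega_n$ for $\theta\in\Omega_m$, so that the hypotheses on $\mathcal{L}$ can be invoked along $\Phi$.

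Case (i) I would dispose of directly. If $h$ is linear, write $h(x)=Ax-b$ and partition $A=[A_\theta \quad A_z]$ conformally with $x=[\theta^T \quad z^T]^T$, so that $h(\Phi(\theta))=0$ becomes $A_\theta\theta+A_z\,p(\theta)=b$. Well-posedness of $p$ forces the square block $A_z$ to be invertible, whence $p(\theta)=A_z^{-1}(b-A_\theta\theta)$ is affine and so is $\Phi$. A convex function precomposed with an affine map is convex, giving convexity of $l$ on $\Omega_m$.

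For cases (ii) and (iii) I would use the one-dimensional characterization of convexity combined with the sign of $\nabla\mathcal{L}$. Fix $\theta_0,\theta_1\in\Omega_m$ and $t\in[0,1]$, and set $\theta_t=(1-t)\theta_0+t\theta_1$. Under (ii), convexity of $p$ gives $p(\theta_t)\le(1-t)p(\theta_0)+t\,p(\theta_1)$ componentwise, so $\Phi(\theta_t)$ is dominated coordinatewise (with equality on the $\theta$-block) by the chord $(1-t)\Phi(\theta_0)+t\,\Phi(\theta_1)$; since $\nabla\mathcal{L}\ge0$ makes $\mathcal{L}$ nondecreasing in each coordinate, and $\mathcal{L}$ is convex, we get $l(\theta_t)=\mathcal{L}(\Phi(\theta_t))\le\mathcal{L}\big((1-t)\Phi(\theta_0)+t\,\Phi(\theta_1)\big)\le(1-t)l(\theta_0)+t\,l(\theta_1)$. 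Case (iii) is identical after flipping both signs: concavity of $p$ places $\Phi(\theta_t)$ above the chord, and $\nabla\mathcal{L}\le0$ makes $\mathcal{L}$ nonincreasing, so the same inequality results. Equivalently, in the twice-differentiable setting one reads this off the Hessian $\nabla^2 l=J\Phi^T\nabla^2\mathcal{L}\,J\Phi+\sum_{j=1}^{n-m}\frac{\partial\mathcal{L}}{\partial x_{m+j}}\nabla^2 p_j$, whose first term is PSD by convexity of $\mathcal{L}$ and whose remaining terms are PSD precisely under the stated sign/curvature pairings.

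The main obstacle I anticipate is bookkeeping rather than depth: pairing the direction of monotonicity of $\mathcal{L}$ with the correct curvature of $p$ (nonnegative gradient with convex $p$; nonpositive gradient with concave $p$), and confirming that the $\theta$-block, being affine, needs no monotonicity hypothesis so that the componentwise gradient conditions are used only on the $z$-block. A secondary subtlety is that the curvature and gradient hypotheses on $\mathcal{L}$ are assumed on $\Omega_n$, so one must ensure $\Phi(\theta)\in\Omega_n$ whenever $\theta\in\Omega_m$ in order to apply them along the graph of $p$; since the functions are only $C^1$, I would keep the Jensen-type argument above as the rigorous route and treat the Hessian identity as an illustration.
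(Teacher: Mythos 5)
Your proof is correct and takes essentially the route the paper itself relies on: the paper gives no in-line proof but defers to an external reference whose argument is exactly the vector composition rule for convex functions (affine inner coordinates need no monotonicity; convex/concave coordinates of $p$ paired with nondecreasing/nonincreasing $\Lc$), which is what you carry out via the Jensen-type chord comparison. Your added care about $\Phi(\theta)\in\Omega_n$ and the convexity of $\Omega_m$ as a projection of $\Omega_n$ is a welcome bit of rigor the paper leaves implicit.
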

\begin{proof}
Readers are referred to~\cite[Proposition IV.2]{9993120} for the proof and~\cite{boyd_vandenberghe_2004} for details on convexity of composite functions used in the proof.
\end{proof}
Using Implicit Function theorem, we can establish the following sufficient conditions on $h(\cdot)$ for which $p(\cdot)$ is convex or concave. This gives us a readily checkable set of conditions of convexity of $l(\cdot)$ without needing to determine the convexity/concavity of function $p(\cdot)$, which may not always be possible.

\begin{proposition}\label{prop:equality_constraints}
\longthmtitle{Conditions on h(x) for convexity of $p(\theta)$}
Following from Proposition~\ref{prop:cvx-nl}, assuming f is convex and h is \textit{strictly} convex on a given set $\Omega_n$, and $h_i$ is twice differentiable $\forall \ i \in \mathbb{N}$, it follows that:
\begin{enumerate}
    \item if $\nabla_p h(x) < 0$ then $p(\theta)$ is \textit{strictly} convex on $\Omega_m$
    \item if $\nabla_p h(x) > 0$ then $p(\theta)$ is \textit{strictly} concave on $\Omega_m$
\end{enumerate}
Additionally, if h is linear in $z$ and strictly convex in $\theta$, then condition (i) follows.
\\
(refer to the Remark for $p(\theta)$)
\end{proposition}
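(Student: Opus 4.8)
The plan is to realize $z=p(\theta)$ as the map implicitly defined by the identity $h(\theta,p(\theta))=0$ and to recover the Hessian of $p$ by differentiating this identity twice, reading off the sign of $\nabla^2_\theta p$ from the strict convexity of $h$ together with the sign of $\nabla_p h$ (the Jacobian $\partial h/\partial z$). First I would invoke the Implicit Function Theorem: since $\nabla_p h$ is sign-definite on $\Omega_n$ it is in particular nonsingular, so $h(\theta,z)=0$ defines a twice-differentiable map $z=p(\theta)$ on $\Omega_m$ (this is where the twice-differentiability of each $h_i$ is used), and differentiating $h(\theta,p(\theta))=0$ once gives the standard first-order relation $\nabla_\theta p=-(\nabla_p h)^{-1}\,\partial h/\partial\theta$.

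The core step is the second differentiation. Writing $\phi(\theta)=[\theta^T\ p(\theta)^T]^T$ with Jacobian $D\phi=[\,I_m^T\ \ (\nabla_\theta p)^T\,]^T$, every constraint satisfies $h_i(\phi(\theta))\equiv 0$, so its $\theta$-Hessian vanishes identically. The chain rule then yields, for each $i$,
\[
   D\phi^{T}\,(\nabla^2 h_i)\,D\phi+\sum_{l}(\nabla_p h)_{il}\,\nabla_\theta^2 p_l=0,
\]
the contributions of the linear (upper) block of $\phi$ dropping out because $\nabla_\theta^2\theta=0$. Here $\nabla^2 h_i$ is the full Hessian of $h_i$ on $\real^n$; strict convexity of $h$ gives $\nabla^2 h_i\succ 0$, and since $D\phi$ has full column rank (its top block is $I_m$), the matrix $D\phi^{T}(\nabla^2 h_i)D\phi$ is positive definite. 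Hence each forcing term $-D\phi^{T}(\nabla^2 h_i)D\phi$ is negative definite.

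It then remains to solve this linear system for the Hessians $\nabla_\theta^2 p_l$. Multiplying through by $(\nabla_p h)^{-1}$ expresses $\nabla_\theta^2 p_k$ as a weighted combination of the negative-definite forcing matrices, with weights given by the entries of $(\nabla_p h)^{-1}$: a negative $\nabla_p h$ turns these into positive-definite contributions, so $\nabla_\theta^2 p_k\succ 0$ and $p$ is strictly convex (case (i)), whereas a positive $\nabla_p h$ gives $\nabla_\theta^2 p_k\prec 0$ and strict concavity (case (ii)). For the additional claim, if $h$ is linear in $z$ then the $z$-rows and $z$-columns of $\nabla^2 h_i$ vanish and the forcing collapses to $-\nabla_\theta^2 h_i$, which is negative definite exactly when $h$ is strictly convex in $\theta$; thus the reasoning of case (i) applies while requiring strict convexity of $h$ only in $\theta$ rather than on all of $\real^n$.

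I expect the last step to be the main obstacle: passing from the sign of $\nabla_p h$ to the sign of $\nabla_\theta^2 p_k$ requires control of the sign pattern of $(\nabla_p h)^{-1}$, not merely of $\nabla_p h$ itself. This is immediate when there is a single dependent variable ($n-m=1$), where $(\nabla_p h)^{-1}$ is just a reciprocal and the forcing is scaled by a definite scalar, but for several constraints one needs the decoupled/recursive structure already assumed for computing $p$ so that $\nabla_p h$ acts componentwise and its inverse inherits the same sign. I would therefore make this structural hypothesis explicit and verify that under it the sign of each $\nabla_\theta^2 p_k$ is determined unambiguously.
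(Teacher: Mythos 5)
Your proposal follows essentially the same route as the paper: differentiate the identity $h(\theta,p(\theta))=0$ twice along the manifold $z=p(\theta)$, use convexity of $h$ to sign the resulting "forcing" terms, and then divide by $\nabla_p h$ to read off the sign of $\nabla^2_\theta p$. Two points of comparison are worth recording. First, your version of the second-derivative identity, $D\phi^{T}(\nabla^2 h_i)D\phi+\sum_l(\nabla_p h)_{il}\nabla^2_\theta p_l=0$, is actually the more careful one: the paper's displayed expansion keeps only the pure-$\theta$ block (term I) and the pure-$z$ block (term III) and silently drops the mixed partials $\partial^2 h_i/\partial\theta\,\partial z_j$, arguing positive semidefiniteness of I and III separately; your full quadratic form $D\phi^{T}(\nabla^2 h_i)D\phi$ absorbs the cross terms and is positive (semi)definite directly from the (strict) convexity of $h_i$, which is the cleaner and correct way to get the same conclusion. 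Second, the obstacle you flag at the end --- that for several dependent variables the sign of $\nabla_p h$ does not by itself control the sign pattern of $(\nabla_p h)^{-1}$, so one cannot isolate each $\nabla^2_\theta p_l$ without additional decoupled/triangular structure --- is real, and the paper's proof does not resolve it either: the paper simply treats each $(i,j)$ pair in isolation and divides by the scalar $\partial h_i/\partial z_j$, which is only legitimate when $n-m=1$ or when the system decouples componentwise (consistent with the paper's standing assumption that the dependent entries can be computed one at a time, closed-form or recursively). So your instinct to make that structural hypothesis explicit strengthens rather than departs from the paper's argument; with it in place, your proof closes and subsumes the paper's.
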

\begin{proof}
We prove condition (i), similar arguments can be extended to prove condition (ii). 
Noting that 
\begin{equation}\label{eq:h_eq}
    h([\theta^T \quad p(\theta)^T]^T) = 0 ,
\end{equation} 
and applying the chain rule and differentiating~\eqref{eq:h_eq} along the manifold $z=p(\theta)$ twice, for $1 \le i \le m$ and $1 \le j \le n-m$ we get:
\begin{equation}\label{eq:h_expand}
    \underbrace{\frac{\partial^2 h_i}{\partial \theta^2}}_{I} + \underbrace{\frac{\partial h_i}{\partial z_j}\frac{\partial^2 z_j}{\partial \theta^2}}_{II} + \underbrace{\frac{\partial^2 h_i}{\partial z_j^2}\frac{\partial z_j}{\partial \theta}\biggl(\frac{\partial z_j}{\partial \theta}}_{III}\biggr)^T = 0
\end{equation}
Since $h_i$ is convex on $\Omega_n \ \forall \ i \in \mathbb{N}$, it follows: 
\begin{enumerate}
    \item I: $\frac{\partial^2 h_i}{\partial \theta^2}$ is a positive semi-definite matrix
    \item III: $\frac{\partial z_j}{\partial \theta}\biggl(\frac{\partial z_j}{\partial \theta}\biggr)^T$ is a symmetrical dyad, i.e., PSD matrix multiplied by $\nabla^2_{z_j} h_i \geq 0$, hence III is a PSD matrix.
\end{enumerate}
For~\eqref{eq:h_expand} to be valid, II has to be a negative semi-definite matrix, since it is the negated sum of positive-semi definite matrices. From condition (i) we have $\frac{\partial h_i}{\partial z_j} < 0$ since $z = p(\theta)$. Thus, $\frac{\partial^2 z_j}{\partial \theta^2}$ is PSD. Therefore, $\frac{\partial^2 p_j(\theta)}{\partial \theta^2}$ is PSD, hence $p_j(\theta)$ is convex $\forall \theta \in \Omega_m, \forall j=1,...,n-m$. 
   \end{proof}
\begin{remark}
Proposition~\ref{prop:equality_constraints} is one of the main contributions of this work. The explicit availability of function $p(\cdot)$ in  closed form is not guaranteed always and $p$ is often estimated iteratively in practice, which is also the rationale for including the penalizing term $\lambda_h\|h(x)\|^2$ in the definition of the loss function $\mathcal{L}(\cdot)$. In the absence of information on $p$, we can use the chain rule and implicit function theorem to calculate $\nabla_p h$ as demonstrated in~\cite{9993120},~\cite{PLD-DR-JZK:21}.
\end{remark} 

\subsection{Numerical Example}\label{section-A}
The following example illustrates how the conditions of Proposition \ref{prop:cvx-nl} and Proposition \ref{prop:equality_constraints} can be verified. We consider the problem in \eqref{eq:problem-equality-nl} where $x \in \real^2$, 
$\Lc$ is the same as $f$.
Now we demonstrate the application of Propostion~\ref{prop:equality_constraints} to define region $\Omega_n$ where $p(\cdot)$ is convex or concave.

Clearly, $\nabla_p h(x) = 2(p(x_1)-4)$, and it is evident that $\nabla_p h(x) < 0$ for $x_2 < 4$ and $\nabla_p h(x) > 0$ for $x_2 > 4$ \textit{(as $x_2=p(x_1)$)}. Using Proposition~\ref{prop:equality_constraints}, we conclude that for $x_2 \le 4$,  $p(\cdot)$ is convex and for $x_2 \ge 4$,  $p(\cdot)$ is concave. Additionally, to ensure that to $p(x_1)$ evaluates to a real number, we must constrain $-1 \leq x_1 \leq 1$. Thus, following Proposition~\ref{prop:equality_constraints}, we construct sets $\bar{\Omega}^1_n$ and $\bar{\Omega}^2_n$ as:
\begin{align}
    \bar{\Omega}^1_n = \{x = [x_1 \; \; x_2]^T | -1\le x_1\le 1, x_2 < 4\} \\
    \bar{\Omega}^2_n = \{x = [x_1 \; \; x_2]^T | -1\le x_1\le 1, x_2 > 4\} 
\end{align}
Since $\Lc(x_1, x_2) = \log(e^{x_1}+e^{x_2})$ it is evident that:
\begin{subequations}
    \begin{align}
         \nabla_{x_1} \Lc = \frac{e^{x_1}}{e^{x_1}+e^{x_2}} \\
        \nabla_{x_2} \Lc = \frac{e^{x_2}}{e^{x_1}+e^{x_2}} 
    \end{align}
\end{subequations}
Clearly, $\nabla \Lc(x_1,x_2) >0$ for all $x \in \real^2$. From Proposition~\ref{prop:cvx-nl}, case (ii), we can see that for $-1 \le x_1\le 1$ and $x_2 \le 4$, $\nabla \Lc(x_1,x_2) >0$ and $p(\cdot)$ is convex. Therefore, $l(x_1)$ is convex in this region. Formally, we define $\Omega_n \equiv \bar{\Omega}^1_n$ as:
\begin{equation}\label{eq:Omega_n}
    \Omega_n = \{x = [x_1 \; \; x_2]^T | -1\le x_1\le 1, x_2 \le 4\}
\end{equation} 
Within the chosen $\Omega_n$, Proposition-\ref{prop:cvx-nl} guarantees that $l(x_1)$ is convex. Consequently, $\Omega_m$ is automatically defined as:
\begin{equation}
    \Omega_m = \{x_1 \in \real | -1 \le x_1 \le 1\}
\end{equation}
Indeed, for values of $x_1 \in \Omega_m$ there are two cases for $p(x_1)$ using Implicit Function Theorem~\cite{Krantz2013} given as:
\begin{equation}\label{eq:p1}
    p(x_1) = 4-\sqrt{1-{x_1}^2} \quad \text{for} \; x_2 \le 4
\end{equation}
\begin{equation}\label{eq:p2}
    p(x_1) = 4+\sqrt{1-{x_1}^2} \quad \text{for} \; x_2 \ge 4
\end{equation}
As we seek to expand the set $\Omega_n$ where $l(\cdot)$ is convex, we note that due to the simple nature of the problem, it is easy to conclude that $l(\cdot)$ is concave for $x_2 \ge 4$, hence $\Omega_n$ for this problem is equivalent to the one in~\eqref{eq:Omega_n}. 

Hence, using $\Omega_n$ as defined in ~\eqref{eq:Omega_n}, $p(\cdot)$ takes the form mentioned in~\eqref{eq:p1}, and by defining $l(\cdot)$ as the one in~\eqref{eq:l_theta_expansion}, the optimization problem reduces to the following convex optimization problem which is much simpler to solve:
\begin{equation}\label{eq:constrained-eg}
    \begin{aligned}
       \min \ &\log(e^{x_1} + e^{4-\sqrt{1-x_1^2}}) \\
       \text{s.t.} \enspace & x_1 \in \Omega_m
    \end{aligned}
\end{equation}
We now state a HT based Algorithm which guarantees convergence to optimal solution provided the condition $\theta \in \Omega_m$ is satisfied.
\subsection{High Order Tuner}
Recently a high-order tuner (HT) was proposed for convex functions and shown to lead to convergence of a loss function. Since this paper builds on this HT, we briefly summarize the underlying HT algorithm presented in~\cite{Jose2021-LCSS} in the form of Algorithm~\ref{algorithm-1}.

The normalizing signal $\mathcal{N}_k$ is chosen as:
\begin{equation*}
    \mathcal{N}_k = 1 + H_k, \quad H_k = max\{\lambda: \lambda \in \sigma(\nabla^2L_k(\theta))\}
\end{equation*}
Here $\sigma(\nabla^2L_k(\theta))$ denotes the spectrum of Hessian Matrix of the loss function\cite{Jose2021-LCSS}.  
\begin{algorithm}[H]
\caption{HT Optimizer for equality-constrained nonconvex optimization}
\begin{algorithmic}[1] 
\label{algorithm-1}
\STATE \textbf{Initial conditions} $\theta_0$, $\nu_0$, gains $\gamma$, $\beta$
\FOR{$k=1$ to $N$}
\STATE Compute $\nabla l(\theta)$ and let \(\mathcal{N}_k=1+H_k\)
\STATE $\nabla \overline{q}_k(\theta_k)= \dfrac{\nabla l(\theta_k)}{\mathcal{N}_k}$
\STATE $\overline{\theta}_k=\theta_k-\gamma\beta\nabla \overline{q}_k(\theta_k)$
\STATE $\theta_{k+1}\leftarrow \overline{\theta}_k-\beta(\overline{\theta}_k-\nu_k) $
\STATE Compute $\nabla l(\theta_{k+1})$ and let
\STATE $\nabla \overline{q}_k(\theta_{k+1})= \dfrac{\nabla l(\theta_{k+1})}{\mathcal{N}_k}$
\STATE $\nu_{k+1} \leftarrow \nu_k -\gamma\nabla \overline{q}_k(\theta_{k+1})$
\ENDFOR
\end{algorithmic}
\end{algorithm}
In Section~\ref{section-A} we established sufficient conditions for the convexity of the modified loss function in Proposition~\ref{prop:cvx-nl} which are easily verifiable. We now state Theorem~\ref{thm:nl} that shows the convergence of HT in Algorithm~\ref{algorithm-1} for the constrained optimization problem in~\eqref{eq:problem-equality-nl}. 
\begin{theorem}\label{thm:nl}
If the objective function $f$ and the equality constraint $h$ in~\eqref{eq:problem-equality-nl} are convex over a set $\Omega_n$, and $\theta_0 \in \Omega_m$, 
where $\Omega_m$ is defined in~\eqref{eq:omega-m},
and if the sequence of iterates $\{\theta_k\}$ generated by Algorithm 1 satisfy $\{\theta_k\} \in \Omega_m$,
then $\underset{k\rightarrow\infty}{\lim}l(\theta_k) = l(\theta^*)$, 
where $l(\theta^*)=f([\theta^{*T} \quad p(\theta^*)^T]^T)$ is the optimal value of~\eqref{eq:problem-equality-nl} where $\gamma$ and $\beta$ are chosen as  \(0<\beta<1\), \(0<\gamma<\frac{\beta(2-\beta)}{8+\beta}\)
\end{theorem}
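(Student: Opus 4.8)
The plan is to reduce the constrained problem to convergence of the high-order tuner on the \emph{convex} reduced loss $l$, and then run a Lyapunov argument in the two algorithm states $\theta_k,\nu_k$. First I would invoke Proposition~\ref{prop:cvx-nl}: the hypotheses that $f$ and $h$ are convex on $\Omega_n$, together with $\Lc$ convex and one of the three listed conditions, guarantee that $l$ is convex on $\Omega_m$. Because the theorem assumes $\theta_0 \in \Omega_m$ and that every iterate satisfies $\theta_k \in \Omega_m$, the run never leaves the region where $l$ is convex, so the task is exactly that of minimizing a convex function with Algorithm~\ref{algorithm-1}, matching the unconstrained setting of~\cite{Jose2021-LCSS}. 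I would take $\theta^*$ to be the minimizer of $l$, so that $\nabla l(\theta^*)=0$ and hence $\nabla \overline{q}_k(\theta^*)=0$ for every $k$.

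Next I would record the two properties supplied by the normalization. Since $\mathcal{N}_k = 1 + H_k$ with $H_k$ the largest eigenvalue of $\nabla^2 l$, the normalized gradient map $\nabla \overline{q}_k = \nabla l/\mathcal{N}_k$ has Hessian with spectral radius strictly below one on $\Omega_m$. Convexity then yields, for all $\theta,\theta' \in \Omega_m$, both $1$-Lipschitzness, $\|\nabla \overline{q}_k(\theta) - \nabla \overline{q}_k(\theta')\| \le \|\theta - \theta'\|$, and co-coercivity, $(\theta-\theta')^\top(\nabla \overline{q}_k(\theta) - \nabla \overline{q}_k(\theta')) \ge \|\nabla \overline{q}_k(\theta) - \nabla \overline{q}_k(\theta')\|^2$. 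Specializing $\theta' = \theta^*$ and using $\nabla \overline{q}_k(\theta^*)=0$ turns these into bounds that control $\nabla \overline{q}_k(\theta_k)$ and $\nabla \overline{q}_k(\theta_{k+1})$ against $\theta_k-\theta^*$ and $\theta_{k+1}-\theta^*$.

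I would then study a Lyapunov candidate of the form
\begin{equation*}
    V_k = \|\nu_k - \theta^*\|^2 + \tfrac{1}{\gamma}\,\|\theta_k - \nu_k\|^2,
\end{equation*}
computing $\Delta V_k = V_{k+1}-V_k$ by substituting the updates for $\theta_{k+1}$ and $\nu_{k+1}$ from Algorithm~\ref{algorithm-1} and expanding the squares. This produces quadratic terms in $\nabla \overline{q}_k(\theta_k)$ and $\nabla \overline{q}_k(\theta_{k+1})$ plus cross terms in $\nu_k-\theta^*$ and $\theta_k-\nu_k$; the aim is to dominate the cross terms using the co-coercivity and Lipschitz inequalities and to collect the remainder into a negative semidefinite quadratic form together with a term proportional to $-(l(\theta_k)-l(\theta^*))$ coming from the convexity descent inequality. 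Requiring that quadratic form to be negative semidefinite is exactly what pins down the admissible gains $0<\beta<1$ and $0<\gamma<\beta(2-\beta)/(8+\beta)$. Summing $\Delta V_k \le -c\,(l(\theta_k)-l(\theta^*))$ over $k$ and using $V_k \ge 0$ then yields $\sum_k (l(\theta_k)-l(\theta^*)) < \infty$, so $l(\theta_k) \to l(\theta^*)$.

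The main obstacle is the cross-term bookkeeping: matching coefficients so that the residual quadratic in the increments is negative semidefinite requires splitting each cross term by Young's inequality with weights tuned to $\beta$ and $\gamma$, and it is the balancing of these weights that produces the precise bound $\beta(2-\beta)/(8+\beta)$ rather than a looser one. A secondary but essential point to state explicitly is that every convexity, Lipschitz, and co-coercivity inequality is invoked only at points of $\Omega_m$ (the iterates and the intermediate point $\overline{\theta}_k$); this is guaranteed by the standing hypothesis $\{\theta_k\}\in\Omega_m$, so no separate feasibility argument is needed here and the reduction to the convex analysis of~\cite{Jose2021-LCSS} carries over.
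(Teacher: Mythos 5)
Your proposal follows essentially the same route as the paper: the paper proves this theorem by direct citation to~\cite[Theorem 2]{Jose2021-LCSS}, which is precisely the Lyapunov argument you sketch (a candidate built from $\lVert\nu_k-\theta^*\rVert^2$ and $\lVert\theta_k-\nu_k\rVert^2$, smoothness/co-coercivity of the normalized gradient, and the gain condition $0<\gamma<\beta(2-\beta)/(8+\beta)$), preceded by the reduction to a convex $l$ on $\Omega_m$ via Proposition~\ref{prop:cvx-nl}. The one caveat is your closing claim that the hypothesis $\{\theta_k\}\in\Omega_m$ alone keeps the auxiliary iterates $\overline{\theta}_k$ and $\nu_k$ inside the region where $l$ is convex --- it does not, and the paper itself acknowledges exactly this gap in the discussion following the theorem (it is the stated motivation for Algorithm~\ref{algorithm-3}) --- but this is a weakness of the theorem's hypotheses rather than of your argument.
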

\begin{proof}
Readers are referred to~\cite[Theorem 2]{Jose2021-LCSS} for the proof.
\end{proof}

\subsection{Satisfaction of Box constraints}
Theorem~\ref{thm:nl} enables us to leverage Algorithm~\ref{algorithm-1}, provided that $\theta_k \in \Omega_m \forall k \in \mathbb{N}$, i.e., the parameter remains inside the set over which $l(\cdot)$ is convex. For the numerical example outlined in Section~\ref{section-A}, it is clear that we can utilize Theorem~\ref{thm:nl} when $x_1\in \Omega_1$ where $\Omega_1=[-1,1]$. It should however be noted that Theorem~\ref{thm:nl} requires that $\theta\in\Omega_m$ for the HT to lead to convergence.
In order to constrain the parameter to be within the set $\Omega_m$, we modify~\eqref{eq:unconstrained-theta} into a constrained optimization problem given by:
\begin{equation}\label{eq:constrained-theta}
    \begin{aligned}
        \min \ &l(\theta) \\
        \text{s.t.} \enspace & \theta \in \Omega_m.
    \end{aligned}
\end{equation}
It must be noted that the set $\Omega_m$ in~\eqref{eq:constrained-theta} is a subset of the feasible solution-space  of~\eqref{eq:problem-equality-nl}
If the conditions given by Proposition~\ref{prop:cvx-nl} are necessary and sufficient,~\eqref{eq:constrained-theta} has the same optimizer $\theta^*$ as that of~\eqref{eq:unconstrained-theta}.  
 While it is tempting to apply a projection procedure for ensuring $\theta \in \Omega_m$, the lack of convexity guarantees of $l(\cdot)$ for all $\theta \in \real^m$ inhibits us from proving the stability of Algorithm~\ref{algorithm-1} as we need $l(\cdot)$
to be convex for all $\overline{\theta}_k, \theta_k, \nu_k \in \real^m$. In the next section, we delineate a general procedure for ensuring that the constraint $\theta \in \Omega_m$ is always satisfied.

\section{Constrained Convex Optimization}
The starting point of this section is problem \eqref{eq:constrained-theta} where $l$ is convex.
Without loss of generality, we assume $\Omega_m$ to be a compact set in $\real^m$. For any such $\Omega_m$, it is always possible to find a bounded interval $I = I_1 \times I_2 ... \times I_m \subseteq \Omega_m$ where $I_i$ is a bounded interval in $\real$ defined as $I_i = [\theta^i_{min}, \ \theta^i_{max}]$  for all $i=1,2,...,m$. 
Using the above arguments, we reformulate the constrained optimization in~\eqref{eq:constrained-theta} as:
\begin{equation}\label{eq:problem_2}
    \begin{aligned}
        \min \ &l(\theta) \\
        \text{s.t.} \enspace & \theta \in I \\
    \end{aligned}
\end{equation}
where $\theta^*$ is the solution of problem~\eqref{eq:problem_2}. In Section~\ref{sec:nonlinear}, we outlined conditions for which~\eqref{eq:constrained-theta} is equivalent to~\eqref{eq:unconstrained-theta}, i.e., $\theta^*$ is the solution to problem~\eqref{eq:unconstrained-theta} by appropriate selection of sets $\Omega_n, \Omega_m$.

Note that~\eqref{eq:constrained-theta} and~\eqref{eq:problem_2} are equivalent if $\theta^* \in I$. The conditions under which $\theta^* \in I$ have been summarized in Proposition~\ref{prop: rolle}. 
\begin{proposition}\label{prop: rolle}
For a given compact set $I$ and convex loss function $l(\theta)$ if there exist $\theta_1, \theta_2$ such that $l(\theta_1) = l(\theta_2)$, then $\theta^* \in I$.
\end{proposition}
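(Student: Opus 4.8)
The plan is to read this as a convex analogue of Rolle's theorem: the equality $l(\theta_1)=l(\theta_2)$ pins a minimizer of $l$ onto the segment joining $\theta_1$ and $\theta_2$, and since that segment lies inside the box $I$, so does $\theta^*$. First I would restrict $l$ to the line through the two points by setting $g(t)=l\big((1-t)\theta_1+t\theta_2\big)$ for $t\in[0,1]$. Because $l$ is convex and $t\mapsto(1-t)\theta_1+t\theta_2$ is affine, $g$ is convex on $[0,1]$, and the hypothesis becomes $g(0)=g(1)$.

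Next I would invoke the one-dimensional fact that a convex function on a closed interval with equal endpoint values attains its minimum in the interior: if $g$ is nonconstant there is $t^*\in(0,1)$ with $0$ in the subdifferential of $g$ at $t^*$, and if $g$ is constant any interior $t^*$ serves. Writing $\theta_s=(1-t^*)\theta_1+t^*\theta_2$, this says the directional derivative of $l$ at $\theta_s$ along $\theta_2-\theta_1$ vanishes. Since $I=I_1\times\cdots\times I_m$ is convex and contains $\theta_1$ and $\theta_2$, the whole segment, and in particular $\theta_s$, lies in $I$.

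In the scalar setting that drives the numerical example ($m=1$), this finishes the argument: $\theta_s$ is a stationary point of the convex function $l$, hence a global minimizer, and since $\theta_s\in I$ we obtain $\theta^*\in I$. The same reasoning applied in each coordinate covers the separable case of the box $I$.

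I expect the main obstacle to be the step from a directional minimizer to a genuine global minimizer when $m>1$. Vanishing of the derivative of $g$ only certifies that $\theta_s$ minimizes $l$ along the single direction $\theta_2-\theta_1$, not over all of $\mathbb{R}^m$ or $I$; promoting it to a global minimizer requires either that $l$ be effectively one-dimensional (or separable), or that the Rolle-type comparison be available in every coordinate of the box so that each component of $\theta^*$ is forced into the corresponding interval $I_i$. Accordingly I would first establish the scalar case carefully and then present the coordinate-wise/separable version as the natural scope of the proposition, since the hypothesis as stated supplies only one pair $(\theta_1,\theta_2)$ and is cleanest in that regime.
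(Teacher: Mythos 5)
Your argument is essentially the same as the paper's: restrict $l$ to the segment joining $\theta_1$ and $\theta_2$, use a Rolle-type argument to produce an interior point where the derivative of the restriction vanishes, and then use convexity to promote that critical point to the global minimizer $\theta^*$, which lies in $I$ because $I$ is convex (the paper phrases the first step via the classical Rolle theorem for differentiable $l$, whereas you use the equivalent convex-analytic fact that a convex $g$ with $g(0)=g(1)$ attains its minimum in the interior; either works, and yours needs one fewer smoothness assumption). The one substantive difference is your treatment of $m>1$, and there you are more careful than the paper: the paper simply invokes a ``vector version of Rolle's theorem,'' but for a scalar-valued $l$ on $\real^m$ that theorem (or the mean value theorem) only yields a point $\theta_s$ on the segment with $\nabla l(\theta_s)^T(\theta_2-\theta_1)=0$, i.e.\ a vanishing \emph{directional} derivative, which does not make $\theta_s$ a stationary point of $l$ and hence does not identify it with $\theta^*$. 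Your decision to establish the scalar case rigorously and to state explicitly that the multivariate conclusion requires either separability or a Rolle-type hypothesis in every coordinate is the honest reading of the proposition; the paper's own proof of the $m\ge 1$ case has exactly the gap you identify, so your proposal is not missing anything the paper supplies --- if anything it flags a point the paper should address, e.g.\ by assuming pairs $\theta_1^i,\theta_2^i$ with equal function values along each coordinate direction, or by hypothesizing level-set containment in $I$.
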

\begin{proof}
For a scalar case, i.e., $\Omega_m \subset  \real$, Rolle's theorem can be applied to the function $l$ being continuous and differentiable. For a subset $[\theta_{1}, \theta_{2}] \subset I$, such that $l(\theta_1) = l(\theta_2)$, by Rolle's Theorem, there exists a $\hat{\theta}\in [\theta_{1}, \theta_{2}]$ such that $\nabla l(\hat{\theta}) = 0$. Since $l$ is differentiable and convex, $\nabla l(\hat{\theta}) = 0 \iff \hat{\theta} = \theta^*$.
This can be extended to the general case where $m \ge 1, \Omega_m\subset\real^m$, using the vector-version of Rolle's theorem, cf.~\cite{doi:10.1080/00029890.1995.11990564}.
\end{proof}
With the assurance from Proposition~\ref{prop: rolle} that $\theta^* \in I$, we now revise Algoirithm~\ref{algorithm-1} in the form of Algorithm~\ref{algorithm-3} to guarantee convergence to $\theta^*$ while ensuring feasibility that $\theta \in I$. We prove that $\overline{\theta}_k, \nu_k, \theta_k \in I$ for all $k \in \mathbb{N}$ through Proposition~\ref{prop:a_k} and subsequently provide guarantees of convergence of the iterates $\{\theta_k\}_{k=1}^{\infty}$ generated by Algorithm~\ref{algorithm-3} to $\theta^*$ using Theorem~\ref{thm:projection}. 
\begin{algorithm}[H]
    \caption{HT Optimizer for localized convex optimization}
        \begin{algorithmic}[1]
        \label{algorithm-3}
            \STATE \textbf{Initial conditions} $\theta_0$, $\nu_0$, gains $\gamma$, $\beta$
            \STATE Choose $\theta_0, \nu_0 \in I$
            \FOR{$k=1$ to $N$}
            \STATE Compute $\nabla l(\theta)$ and let \(\mathcal{N}_k=1+H_k\)
            \STATE $\nabla \overline{q}_k(\theta_k)= \dfrac{\nabla l(\theta_k)}{\mathcal{N}_k}$
            \STATE $\overline{\theta}_k=\theta_k-\gamma\beta a_k\nabla \overline{q}_k(\theta_k)$
            \STATE $\theta_{k+1}= \overline{\theta}_k-\beta(\overline{\theta}_k-\nu_k) $
            \STATE Compute $\nabla l(\theta_{k+1})$ and let
            \STATE $\nabla \overline{q}_k(\theta_{k+1})= \dfrac{\nabla l(\theta_{k+1})}{\mathcal{N}_k}$
            \STATE $\nu_{k+1} \leftarrow \nu_k -\gamma b_{k+1}\nabla \overline{q}_k(\theta_{k+1})$
            \ENDFOR
        \end{algorithmic}
\end{algorithm}
\begin{proposition}\label{prop:a_k}
    Consider Algorithm~\ref{algorithm-3}, for a given $k \in \mathbb{N}$, if $\theta_k, \nu_k \in I$, there exist real numbers $a_k > 0$ and $b_{k+1} > 0$ such that $\overline{\theta}_k, \nu_{k+1} \in I$. Consequently, for $0<\beta \le 1$, and $\theta_0, \nu_0 \in I$, Algorithm~\ref{algorithm-3} guarantees $\theta_{k}, \overline{\theta}_k, \nu_k \in I$ for all values of $k \in \mathbb{N}$.  
\end{proposition}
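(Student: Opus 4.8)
The plan is to argue by induction on $k$, carrying the inductive hypothesis that $\theta_k, \nu_k \in I$; the base case holds because line 2 of Algorithm~\ref{algorithm-3} initializes $\theta_0, \nu_0 \in I$. The structural observation I would exploit first is that the update in line 7 is a convex combination: since $\theta_{k+1} = \overline{\theta}_k - \beta(\overline{\theta}_k - \nu_k) = (1-\beta)\overline{\theta}_k + \beta\nu_k$ with $0 < \beta \le 1$, the point $\theta_{k+1}$ lies on the segment joining $\overline{\theta}_k$ and $\nu_k$. Because $I = I_1 \times \cdots \times I_m$ is a product of closed intervals and hence convex, once I have established $\overline{\theta}_k \in I$ and recalled $\nu_k \in I$, membership $\theta_{k+1} \in I$ is immediate. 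Thus the entire argument reduces to producing the two positive gains $a_k$ and $b_{k+1}$ that keep the two gradient-type steps inside the box.

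To select $a_k$, I would analyze $\overline{\theta}_k = \theta_k - \gamma\beta a_k\nabla\overline{q}_k(\theta_k)$ coordinate by coordinate. Writing $g_i = (\nabla\overline{q}_k(\theta_k))_i$, the requirement $\theta^i_{min} \le \overline{\theta}_k^i \le \theta^i_{max}$ reduces to a single one-sided bound on $a_k$ for each $i$: if $g_i > 0$ it reads $a_k \le (\theta_k^i - \theta^i_{min})/(\gamma\beta g_i)$; if $g_i < 0$ it reads $a_k \le (\theta^i_{max} - \theta_k^i)/(\gamma\beta|g_i|)$; and if $g_i = 0$ there is no constraint on coordinate $i$. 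Choosing $a_k$ to be any positive number below the minimum of these finitely many bounds then forces $\overline{\theta}_k \in I$. An identical computation applied to $\nu_{k+1} = \nu_k - \gamma b_{k+1}\nabla\overline{q}_k(\theta_{k+1})$, this time using $\nu_k \in I$, produces the required $b_{k+1} > 0$.

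The step I expect to be the main obstacle is guaranteeing that these gains can be taken \emph{strictly} positive rather than merely nonnegative: the minimum of the per-coordinate bounds collapses to zero precisely when some coordinate sits on the boundary face toward which its descent direction points (for example $\theta_k^i = \theta^i_{min}$ while $g_i > 0$). I would dispose of this by maintaining the iterates in the interior of $I$. Since $I$ has nonempty interior, the initialization can be taken with $\theta_0, \nu_0$ interior, and at each step I would pick $a_k$ and $b_{k+1}$ strictly below the maximal feasible value, so that $\overline{\theta}_k$ and $\nu_{k+1}$ stay interior; as the interior of a convex set is itself convex, the convex combination $\theta_{k+1} = (1-\beta)\overline{\theta}_k + \beta\nu_k$ is then interior as well, and no outward-pointing face is ever activated. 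Convexity of $l$ together with Proposition~\ref{prop: rolle}, which places the minimizer $\theta^*$ inside $I$, ensures this interior descent is compatible with progress toward $\theta^*$. Closing the induction with these interior choices yields $\theta_k, \overline{\theta}_k, \nu_k \in I$ for all $k \in \mathbb{N}$, which is the claimed conclusion.
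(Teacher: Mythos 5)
Your proposal is correct and follows the same skeleton as the paper's proof: per-coordinate upper bounds on $a_k$ and $b_{k+1}$ obtained from the box faces (your one-sided bounds are exactly the paper's $\hat{a}_k$, $\tilde{a}_k$, $\hat{b}_{k+1}$, $\tilde{b}_{k+1}$ in \eqref{eq:case1}, \eqref{eq:case2}, \eqref{eq:b_kp1}), the convex-combination identity $\theta_{k+1}=(1-\beta)\overline{\theta}_k+\beta\nu_k$ from Step 7, and induction from $\theta_0,\nu_0\in I$. The one place you genuinely diverge is the degenerate case where an iterate sits on the face toward which the descent direction points, so the admissible gain collapses to zero. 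The paper patches this with the exception rule \eqref{eq:exception}, which sets $a_k=\pm\epsilon$ (including a \emph{negative} value when a coordinate hits $\theta^i_{max}$), i.e., it briefly reverses the step to re-enter the box; note that this sits awkwardly with the proposition's own claim that $a_k>0$. Your fix --- initialize in the interior and always choose the gains strictly below the maximal feasible value, so that $\overline{\theta}_k$, $\nu_{k+1}$, and hence the convex combination $\theta_{k+1}$ remain interior and no face is ever activated --- is cleaner and avoids the sign reversal entirely, at the mild cost of proving the inductive invariant only for interior initialization rather than for arbitrary $\theta_k,\nu_k\in I$ (the first sentence of the proposition, read literally for boundary points, is in fact not salvageable with $a_k>0$, so this restriction is the honest one). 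A further small improvement on your side: you key the case split directly on the sign of $\nabla_i l(\theta_k)$ rather than on the paper's questionable coordinatewise equivalence $\theta^i_k>\theta^{*i}\iff\nabla_i l(\theta_k)>0$, which need not hold for multivariate convex $l$; your version uses only what is actually needed.
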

\begin{proof}
We first provide conditions for the selection of $a_k$.

For a given $\theta_k \in \real^m$ and $i \in \mathbb{N}$, consider $\theta_k^{i} \in \real$ such that $\theta_k^{i} \in [\theta_{min}^i, \; \theta_{max}^i]$. There are two possible cases:
\begin{enumerate}
    \item $\theta_k^{i} > \theta^{*i} \iff \nabla_i l(\theta_k) > 0$
    \item $\theta_k^{i} < \theta^{*i} \iff \nabla_i l(\theta_k) < 0$
\end{enumerate}
For case (ii), using \textbf{Step-6} of Algorithm~\ref{algorithm-3} we have, 
\begin{equation}\label{eq:theta-bar}
    \overline{\theta}^{i}_{k} = \theta^{i}_{k} + a_k\frac{\gamma\beta|\nabla_i l(\theta_{k})|}{\mathcal{N}_{k}}
\end{equation}
For $a_k > 0$, $\overline{\theta}^{i}_{k} > \theta^{i}_{min}$ in~\eqref{eq:theta-bar}. We need to ensure that $\overline{\theta}^{i}_{k} < \theta^{i}_{max}$ for all $i$. Therefore, we must ensure:
\begin{align}\label{eq:case1-pre}
    \theta_{k}^{i} + a_k\frac{\gamma\beta|\nabla_{i} l(\theta_{k})|}{\mathcal{N}_{k}} \leq \theta_{max}^{i} \quad \forall i.
    \end{align}
 Inequality~\eqref{eq:case1-pre} would be true if $a_k \le \hat{a}_k$, where 
  \begin{align}\label{eq:case1}
     \hat{a}_k = \min\limits_{i \in \{1,\ldots,m\}} \frac{(\theta_{max}^{i}-\theta_{k}^{i})\mathcal{N}_{k}}{\gamma\beta|\nabla_{i} l(\theta_{k})|}.
\end{align}
Similarly, for case (i), we get the following inequality criteria for $a_k$ to ensure that $\overline{\theta}^{i}_{k} > \theta^{i}_{min}$ for all $i$:
\begin{equation}\label{eq:case2}
    a_k \leq \tilde{a}_k = \min\limits_{i \in \{1,\ldots,m\}} \frac{(\theta_{k}^{i}-\theta_{min}^{i})\mathcal{N}_{k}}{\gamma\beta|\nabla_{i} l(\theta_{k})|}
\end{equation}
Combining~\eqref{eq:case1} and~\eqref{eq:case2}, we have
\begin{align}\label{eq:a_k}
    a_k \le \bar{a}_k = \min \{\hat{a}_k,\tilde{a}_k\} \quad \forall k.
\end{align}
We now outline conditions for selection of $b_{k+1}$, which follows similar approach to selection of $a_k$, i.e., for given $\nu_k \in I$ we prescribe range of $b_{k+1}$ such that $\nu_{k+1} \in I$. 

From \textbf{Step-10} of Algorithm~\ref{algorithm-3}, it could be deduced that for all $k$, $b_{k+1}$ must satisfy
\begin{equation}\label{eq:b_kp1}
    b_{k+1}\leq \bar{b}_{k+1} = \min \{\hat{b}_{k+1} , \tilde{b}_{k+1}\}
\end{equation}
where 
\begin{align*}
   \hat{b}_{k+1} =& \min\limits_{i \in \{1,\ldots,m\}}\frac{(\theta_{max}^i-\nu_{k}^i)\mathcal{N}_{k}}{\gamma|\nabla_i l(\theta_{k+1})|} \\
   \tilde{b}_{k+1} =& \min\limits_{i \in \{1,\ldots,m\}} \frac{(\nu_{k}^i-\theta_{min}^i)\mathcal{N}_{k}}{\gamma|\nabla_i l(\theta_{k+1})|}.
\end{align*}
Note however, that~\eqref{eq:a_k},~\eqref{eq:b_kp1} can generate $a_k, b_{k+1} = 0$, which is undesirable. To compensate for that, we introduce an additional rule:
\begin{equation}\label{eq:exception}
    a_k = \begin{cases}
  -\epsilon  & \min\limits_{i \in \{1,\ldots,m\}} (\theta^i_{k}- \theta^i_{max})=0 \\
  \epsilon & \min\limits_{i \in \{1,\ldots,m\}} (\theta^i_{k}-\theta^i_{min})=0 
\end{cases} 
\end{equation}
Here $0<\epsilon<1$ is a very small real number of choice. Similar update rule can be stated for $b_{k+1}$ to avoid the case of $a_k, b_{k+1}=0$. 
 For a given $k$, by selecting $a_k$, $b_{k+1}$ such that~\eqref{eq:a_k},~\eqref{eq:b_kp1},~\eqref{eq:exception} are satisfied, we ensure $\overline{\theta}_k , \nu_k \in I$. From \textbf{Step-7} of Algorithm~\ref{algorithm-3}:
\begin{equation}\label{eq:theta_bound}
        \theta_{k+1} = (1-\beta)\overline{\theta}_k + \beta \nu_k
\end{equation}
Hence, $\theta_{k+1}$ is a convex combination of $\overline{\theta}_k$ and $\nu_k$ for a given $k$ and $0 < \beta \le 1$. Additionally, set $I$ is compact, hence, if $\overline{\theta}_k, \nu_k \in I$, then $\theta_{k+1} \in I$ for a given $k \in \mathbb{N}$. By choosing $\theta_0, \nu_0 \in I$, by induction it can be shown that Proposition~\ref{prop:a_k} can be applied iteratively to generate parameters that are bounded within the compact set I. 
\end{proof}

Proposition~\ref{prop:a_k} outlines conditions under which all parameters generated by Algorithm-\ref{algorithm-3} are bounded within set $I$, where the convexity of loss function $l(\cdot)$ is guaranteed. Theorem~\ref{thm:projection} formally establishes the convergence of Algorithm~\ref{algorithm-3} to an optimal solution $\theta^*$. 
\begin{theorem}\label{thm:projection}
    \longthmtitle{Convergence of the HT algorithm constrained to ensure convexity}
For a differentiable $\bar{L}_k$-smooth convex loss function $l(.)$, Algorithm~\ref{algorithm-3} with $0 < \beta \leq 1$, $0<\gamma<\frac{\beta(2-\beta)}{8+\beta}$ and $a_k,b_{k+1}$ satisfying
$a_k \le \min\{1,\bar{a}_k\}$, $b_{k+1} \le \min\{1,\bar{b}_{k+1}\}$ and~\eqref{eq:exception}, where
$\bar{a}_{k+1}$ and $\bar{b}_{k+1}$ are defined in~\eqref{eq:a_k} and \eqref{eq:b_kp1} ensures that $V = \frac{\lVert\nu-\theta^*\rVert^2}{\gamma}+ \frac{\lVert\nu-\theta\rVert^2}{\gamma}$ is a Lyapunov function. Consequently, the sequence of iterates $\{\theta_k\}$ generated by Algorithm~\ref{algorithm-3} satisfy $\{\theta_k\} \in \Omega_m$,
and $\underset{k\rightarrow\infty}{\lim}l(\theta_k) = l(\theta^*)$, 
where $l(\theta^*)=f([\theta^{*T} \quad p(\theta^*)^T]^T)$ is the optimal value of~\eqref{eq:constrained-theta}. 
\end{theorem}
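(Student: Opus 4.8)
The plan is to establish that the candidate $V_k = \frac{\lVert \nu_k - \theta^* \rVert^2}{\gamma} + \frac{\lVert \nu_k - \theta_k \rVert^2}{\gamma}$ is a nonnegative, non-increasing Lyapunov function along the iterates of Algorithm~\ref{algorithm-3}, and then extract convergence of the loss from its dissipation. The argument reuses the template of Theorem~\ref{thm:nl} (equivalently \cite[Theorem 2]{Jose2021-LCSS}), which proves the decrease for the unit-gain case $a_k = b_{k+1} = 1$; the novelty here is accommodating the time-varying gains. Feasibility of the whole trajectory is not re-derived: Proposition~\ref{prop:a_k} already guarantees $\theta_k, \overline{\theta}_k, \nu_k \in I \subseteq \Omega_m$ for every $k$, which is exactly what licenses the use of convexity and $\bar{L}_k$-smoothness of $l$ at each iterate throughout the computation.

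First I would compute the one-step difference $\Delta V_k = V_{k+1} - V_k$ by substituting Steps 6, 7 and 10 of Algorithm~\ref{algorithm-3}. Expanding $\lVert \nu_{k+1} - \theta^* \rVert^2$ with $\nu_{k+1} = \nu_k - \gamma b_{k+1} \nabla \overline{q}_k(\theta_{k+1})$ produces, after division by $\gamma$, a cross term $-2 b_{k+1}\langle \nabla \overline{q}_k(\theta_{k+1}), \nu_k - \theta^* \rangle$ and a quadratic term $\gamma b_{k+1}^2 \lVert \nabla \overline{q}_k(\theta_{k+1}) \rVert^2$; expanding the second summand using \eqref{eq:theta_bound} and Step 6 yields analogous terms carrying the factor $a_k$. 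I would then split the cross term as $\langle \nabla \overline{q}_k(\theta_{k+1}), \nu_k - \theta^* \rangle = \langle \nabla \overline{q}_k(\theta_{k+1}), \nu_k - \theta_{k+1} \rangle + \langle \nabla \overline{q}_k(\theta_{k+1}), \theta_{k+1} - \theta^* \rangle$ and invoke convexity, $\langle \nabla l(\theta_{k+1}), \theta_{k+1} - \theta^* \rangle \ge l(\theta_{k+1}) - l(\theta^*) \ge 0$, so that this is the dissipative term. The normalization $\mathcal{N}_k = 1 + H_k$ is used exactly as in \cite{Jose2021-LCSS} to make $\nabla \overline{q}_k$ behave as a normalized ($1$-smooth) gradient, which is what lets the smoothness-based descent inequality control the remaining quadratic terms.

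The main obstacle is this last step: verifying that introducing the multiplicative gains $a_k, b_{k+1} \in (0,1]$ does not destroy the sign-definiteness of $\Delta V_k$, given that the Lyapunov function retains the fixed $\gamma$ (rather than $\gamma a_k$ or $\gamma b_{k+1}$) in its denominator. The favorable structural fact I would exploit is that $a_k^2 \le a_k$ and $b_{k+1}^2 \le b_{k+1}$ for gains in $(0,1]$: every destabilizing second-order term is scaled by a squared gain and hence shrinks strictly faster than the matching dissipative first-order term that must dominate it. I expect this to render the constant-gain bound $0 < \gamma < \frac{\beta(2-\beta)}{8+\beta}$ from the unit-gain analysis sufficient here as well, giving $\Delta V_k \le 0$. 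The delicate bookkeeping is the coupling term, in which $a_k$ enters $\overline{\theta}_k$ and then propagates through $\theta_{k+1}$ into the $b_{k+1}$-scaled $\nu$ update; I would control the resulting mixed $a_k b_{k+1}$ contribution with Young's inequality to keep it inside the admissible range for $\gamma$.

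Finally, since $V_k \ge 0$ is non-increasing it converges, so $\Delta V_k \to 0$; the dissipation estimate $\Delta V_k \le -c\,(l(\theta_{k+1}) - l(\theta^*)) - (\text{nonnegative terms})$ then forces $l(\theta_k) - l(\theta^*) \to 0$, i.e. $\lim_{k\to\infty} l(\theta_k) = l(\theta^*)$, via summability of the nonnegative dissipation together with continuity of $l$. The claim $\{\theta_k\} \in \Omega_m$ is immediate from Proposition~\ref{prop:a_k}, and for the convexity facts on composite and smooth functions used along the way I would cite \cite{boyd_vandenberghe_2004}.
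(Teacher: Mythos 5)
Your proposal follows essentially the same route as the paper: both treat $V$ as a Lyapunov candidate, lean on Proposition~\ref{prop:a_k} to keep all iterates in $I$ so that convexity and $\bar{L}_k$-smoothness may be invoked, rerun the decrement computation of \cite[Theorem 2]{Jose2021-LCSS} with the gains inserted, and use precisely the fact that $a_k, b_{k+1} \in (0,1]$ makes the gain-squared (destabilizing) terms dominated by the gain-linear (dissipative) ones under the same bound $0<\gamma<\frac{\beta(2-\beta)}{8+\beta}$, before concluding via summability of $l(\theta_{k+1})-l(\theta^*)$. The paper's displayed bound on $\Delta V_k$ confirms your structural observation, e.g.\ through the terms $-[\beta-a_k\beta(1-\beta)^2]\bar{L}_k\lVert\overline{\theta}_k-\nu_k\rVert^2$ and $-4(\sqrt{b_{k+1}}-b_{k+1})\lVert\overline{\theta}_k-\nu_k\rVert\,\lVert\nabla l(\theta_{k+1})\rVert$, which are nonpositive exactly because the gains do not exceed one.
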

\begin{proof}
This proof follows a similar approach to the proof of stability of High Order Tuner for convex optimization, as illustrated in ~\cite[Theorem 2]{Jose2021-LCSS}. 

Assuming that $\nu_k, \theta_k, \overline{\theta}_k \in I$, function $l(\cdot)$ is convex for all these parameters lying within the set $I$. Applying convexity and smoothness properties (ref. ~\cite[Section II]{Jose2021-LCSS}) to $l(\cdot)$, the following upper bound is obtained:
    \begin{equation*}
        l(\vartheta_k)-l(\bar{\theta}_k)=l(\vartheta_k)-l(\theta_{k+1})+l(\theta_{k+1})-l(\bar{\theta}_k)
    \end{equation*}
    \vspace{0cm}
    \begin{equation}\label{eq:two}
        \begin{split}
            &{\leq} \nabla l(\theta_{k+1})^T(\vartheta_{k}-\theta_{k+1})+\frac{\bar{L}_k}{2}\lVert\vartheta_{k}-\theta_{k+1}\rVert^2\\
            &\quad +\nabla l(\theta_{k+1})^T(\theta_{k+1}-\bar{\theta}_k)
        \end{split}
    \end{equation}
        \begin{equation}\label{eq:three}
        \begin{split}
            &\overset{\text{Alg.} \ref{algorithm-3}}{\leq} \nabla l(\theta_{k+1})^T(\vartheta_{k}-\bar{\theta}_k)+\frac{\bar{L}_k}{2}\lVert\vartheta_{k}-(1-\beta)\bar{\theta}_k-\beta\vartheta_k)\rVert^2
        \end{split}
    \end{equation}
    \begin{equation} \label{eq:UB_result1}
        \begin{split}
            &l(\vartheta_k)-l(\bar{\theta}_k)\\
            &\leq -\nabla l(\theta_{k+1})^T(\bar{\theta}_k-\vartheta_{k})+\frac{\bar{L}_k}{2}(1-\beta)^2\lVert\bar{\theta}_k-\vartheta_k\rVert^2.
        \end{split}
    \end{equation}
    Similarly, we obtain:
    \begin{equation} \label{eq:UB_result2}
        \begin{split}
            &l(\bar{\theta}_k)-l(\vartheta_k)\\
            &\quad\leq \nabla l(\theta_{k})^T(\bar{\theta}_k-\vartheta_{k})+\frac{a_k^2\bar{L}_k\gamma^2\beta^2}{2\N_k^2}\lVert \nabla l(\theta_k)\rVert^2.
        \end{split}
    \end{equation}
    Using \eqref{eq:UB_result1} and \eqref{eq:UB_result2} we obtain:
    \begin{equation}\label{eq:mainresult_1}
        \begin{split}
            &\nabla l(\theta_{k+1})^T(\bar{\theta}_k-\vartheta_{k})\\
            &-\frac{\bar{L}_k}{2}(1-\beta)^2\lVert\bar{\theta}_k-\vartheta_k\rVert^2\\
            &-\frac{a_k^2\bar{L}_k\gamma^2\beta^2}{2\N_k^2}\lVert \nabla l(\theta_k)\rVert^2\leq \nabla l(\theta_{k})^T(\bar{\theta}_k-\vartheta_{k})
        \end{split}
    \end{equation}
    
    Using Algorithm \ref{algorithm-3}, \cite[Theorem 1]{Jose2021-LCSS} and \eqref{eq:mainresult_1}, setting $\gamma<\frac{\beta(2-\beta)}{8+\beta}$, $0< a_k, b_{k+1} \le 1$ and defining $\Delta V_k:=V_{k+1}-V_k$, it can be shown that 
    \begin{align}\label{eq:key-step}
    \Delta V_k \leq \frac{1}{\mathcal{N}_k}\Bigg\{
    -2b_{k+1}(l(\theta_{k+1})-l(\theta^*)) \nonumber \\
    -\Bigg(\frac{b_{k+1}}{2\bar{L}_k}-\frac{2\gamma b_{k+1}^2}{\mathcal{N}_k}\Bigg)\lVert \nabla l(\theta_{k+1})\rVert^2 \nonumber \\
    -\Bigg(1-\frac{\bar{L}_k\gamma\beta a_k}{\mathcal{N}_k}\Bigg)\frac{\gamma\beta^2a^2_k}{\mathcal{N}_k}\lVert \nabla l(\theta_k)\rVert^2 \nonumber \\
    -[\beta-a_k\beta(1-\beta)^2]\bar{L}_k\lVert \overline{\theta}_k-\nu_k \rVert^2 \nonumber \\
    -\Bigg[\frac{\sqrt{b_{k+1}}}{\sqrt{2\bar{L}_k}}\lVert \nabla l(\theta_{k+1})\rVert -2\sqrt{2\bar{L}_k}\lVert\overline{\theta}_k-\nu_k\rVert\Bigg]^2 \nonumber \\
    -4(\sqrt{b_{k+1}}-b_{k+1})\lVert \overline{\theta}_k-\nu_k \rVert \lVert \nabla l(\theta_{k+1})\rVert  \nonumber \\
    -(8+\beta)\lVert\overline{\theta}_k-\nu_k\rVert^2 \Bigg\} \leq 0
\end{align}
From~\eqref{eq:key-step}, it can be seen that:
\begin{equation}
     \Delta V_k \leq \frac{b_{k+1}}{\mathcal{N}_k}\{
    -2(l(\theta_{k+1})-l(\theta^*)\} \leq 0
\end{equation}
Collecting $\Delta V_k$ terms from $t_0$ to $T$, and letting $T\rightarrow \infty$, it can be seen that  $l(\theta_{k+1})-l(\theta^*)
\in\ell_1\cap\ell_\infty$ and therefore $\lim_{k\rightarrow\infty}l(\theta_{k+1})
-l(\theta^*)=0.$
\end{proof}
\section{Numerical Study}
\subsection{An academic example}
We consider the same example as before which led to the following constrained convex optimization problem:
\begin{equation}\label{eq:projection-example-1}
    \begin{aligned}
      \min \ &\log(e^{x_1} + e^{4-\sqrt{1-x_1^2}}) \\
      \text{s.t.} \enspace &-1 \leq x_1 \leq 1
    \end{aligned}
\end{equation}
The box constraint in~\eqref{eq:projection-example-1} is  imposed to ensure that the objective function is always real-valued. While we can choose certain step-sizes that ensure that $-1 \leq x_1 \leq 1$ for solving~\eqref{eq:projection-example-1}, there are no guarantees that such a step-size exists to ensure this constraint. Algorithm~\ref{algorithm-3} solves this problem with suitable choices of $\gamma$, $\beta$, $a_k$ and $b_{k+1}$ as specified in Theorem~\ref{thm:projection}.  Figure~\ref{fig:Numerical-example-1} shows the convergence of parameter $\theta$ (equivalent to $x_1$ in~\eqref{eq:projection-example-1}) using Algorithm~\ref{algorithm-3} for a chosen value of $\gamma, \beta$. It should be noted in Figure~\ref{fig:Numerical-example-1} that Algorithm~\ref{algorithm-1} fails to converge. This illustrates the value of Theorem III.3, another contribution of this paper.
\begin{figure}
    \includegraphics[width=0.4\textwidth]{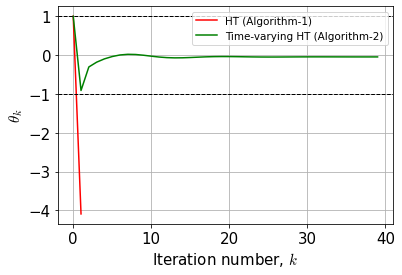}
    \caption{Convergence of $\theta$ using Algorithm~\ref{algorithm-3} for the problem in~\eqref{eq:projection-example-1}; Algorithm~\ref{algorithm-1} fails to converge}
    \label{fig:Numerical-example-1}
\end{figure}
\subsection{Provably hard problem of Nesterov}
We consider a provably hard problem which corresponds to a strongly convex function (see~\cite{Jose2021-LCSS} for details)
\begin{equation}\label{eq:simulation_loss_mu}
    l(\theta)=\log{(c_ke^{d_k\theta}+c_ke^{-d_k\theta})}+\frac{\mu}{2}\lVert\theta-\theta_0\rVert^2.
\end{equation}
Here $c_k$ and $d_k$ are positive scalars chosen as $c_k = \frac{1}{2}$ and $d_k = 1$. This function has a unique minimum at $\theta^*=0$. 
In all cases, $\mu=10^{-4}$, the intial value was chosen to be $\theta_0=2$ and the constraints are chosen as $\theta_{min}=-1$ and $\theta_{max}=2$. It is clear from Figure \ref{fig:simulation-theta} that with Algorithm~\ref{algorithm-3}, the parameters converge to the optimal value while being within $[\theta_{min}, \theta_{max}]$. The speed of convergence is faster than that of Algorithm~\ref{algorithm-1} and Algorithm~\ref{algorithm-3} exhibits lesser oscillations, which is an attractive property. 
\begin{figure}
    \includegraphics[width=0.4\textwidth]{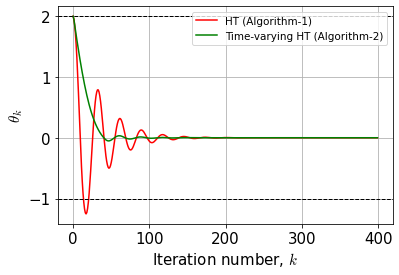}
    \caption{Constraint satisfaction by Algorithm-2, Algorithm-1 generates iterates that violate the box constraints [-1,2]. Algorithm-2 displays smaller amplitude oscillations compared to Algorithm-1}
    \label{fig:simulation-theta}
\end{figure}

\section{Conclusion and Future Work}
In this work, we extend the previously conducted study on using High Order Tuners to solve constrained convex optimization. We propose a new HT that can accommodate the constraints and the non-convexities with guaranteed convergence. We provide academic examples and numerical simulations to validate the theorems presented in the paper pertaining to the accelerated convergence and feasibility guarantees for High Order Tuner. This work establishes a framework to explore potential applications of constrained optimization that would benefit from faster convergence, such as neural network training which can be reformulated in some cases as convex optimization problems~\cite{pmlr-v119-pilanci20a}, and solving Optimal Power Flow (OPF) problems using neural networks as in~\cite{Pan:21}, \cite{inproceedings}. 
\bibliographystyle{IEEEtran}
\bibliography{alias.bib,main.bib}
\end{document}